\newtheorem*{theorem}{Theorem}
\newtheorem{lemma}{Lemma}
\DeclareMathOperator{\Aut}{Aut}
\journal{arXiv.org}
\begin{document}

\begin{frontmatter}

\title{An elementary proof of a power series identity for the weighted sum of all finite abelian $p$-groups}

\author{Pritam Majumder}

\address{Indian Institute of Technology Kanpur, India}

\doublespacing
\begin{abstract}

Using combinatorial techniques, we prove that the weighted sum of the inverse number of automorphisms of all finite abelian $p$-groups $\sum_G |G|^{-u} \left|\Aut (G)\right|^{-1}$ is equal to $\prod_{j=u+1}^\infty\left(1-1/p^j\right)^{-1}$, where $u$ is a non-negative integer. This result was originally obtained by H. Cohen and H. W. Lenstra, Jr. In this paper we give a new elementary proof of their result.

\end{abstract}

\end{frontmatter}

\section{Introduction}
\label{S:1}
Let $\mathcal{G}_p$ denote the set of all finite abelian $p$-groups, for a prime number $p$. We will give an elementary combinatorial proof of the following theorem:

\begin{theorem}
For a prime $p$ and a non-negative integer $u$, the following holds
$$\sum_{G\in\mathcal{G}_p}\frac{1}{|G|^u\cdot \left|\Aut (G)\right|}=\prod_{j=u+1}^\infty\left(1-\frac{1}{p^j}\right)^{-1}.$$
\end{theorem}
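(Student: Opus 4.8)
The plan is to realize the weight $1/|\Aut(G)|$ as a count of sublattices and to read the identity off from the subgroup-counting generating function of $\mathbb{Z}_p^n$ in the limit $n\to\infty$. First I would fix $n$ and work with the free $\mathbb{Z}_p$-module $\mathbb{Z}_p^n$. The group $\Aut(G)$ acts freely by post-composition on the set of (continuous) surjections $\mathbb{Z}_p^n\to G$, and the orbits are in bijection with the finite-index subgroups $H$ satisfying $\mathbb{Z}_p^n/H\cong G$ (send $\phi$ to $\ker\phi$); hence
$$\#\{H\le \mathbb{Z}_p^n : \mathbb{Z}_p^n/H\cong G\}=\frac{\#\mathrm{Sur}(\mathbb{Z}_p^n,G)}{|\Aut(G)|}.$$
A short lifting argument (a homomorphism is onto iff it is onto modulo $p$) gives $\#\mathrm{Sur}(\mathbb{Z}_p^n,G)=|G|^n\prod_{i=0}^{r-1}(1-p^{i-n})$, where $r=\dim_{\mathbb{F}_p}(G/pG)$ is the rank of $G$; note this vanishes exactly when $r>n$, so the rank constraint is automatic.

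Next I would compute the generating function $\sum_{H}[\mathbb{Z}_p^n:H]^{-s}$ over all finite-index subgroups directly. Putting each $H$ in Hermite normal form gives a bijection between such $H$ and upper-triangular matrices with diagonal $(p^{a_1},\dots,p^{a_n})$, where the $\ell$-th column contributes a factor $p^{(\ell-1)a_\ell}$ from its reduced off-pivot entries and the index is $p^{a_1+\cdots+a_n}$. Summing $\prod_{\ell}p^{(\ell-1)a_\ell-sa_\ell}$ over all $a_1,\dots,a_n\ge 0$ factors as a product of geometric series:
$$\sum_{H}[\mathbb{Z}_p^n:H]^{-s}=\prod_{i=0}^{n-1}\left(1-p^{i-s}\right)^{-1}.$$
Grouping the left-hand side by the isomorphism type of $\mathbb{Z}_p^n/H$ and inserting the two formulas above yields, for every $n$,
$$\prod_{i=0}^{n-1}\left(1-p^{i-s}\right)^{-1}=\sum_{G\in\mathcal{G}_p}\frac{|G|^{n-s}}{|\Aut(G)|}\prod_{i=0}^{r-1}\left(1-p^{i-n}\right).$$

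Finally I would specialize $s=n+u$, so that $|G|^{n-s}=|G|^{-u}$ and, after reindexing, the left-hand side becomes $\prod_{m=u+1}^{u+n}(1-p^{-m})^{-1}$; then I let $n\to\infty$. For each fixed $G$ the correction factor $\prod_{i=0}^{r-1}(1-p^{i-n})$ equals $0$ while $n<r$ (the factor at $i=n$ is $1-p^{0}=0$) and thereafter increases monotonically to $1$, so every summand is nondecreasing in $n$. Monotone convergence then identifies the limit of the right-hand side with $\sum_{G\in\mathcal{G}_p}|G|^{-u}/|\Aut(G)|$ and the limit of the left-hand side with $\prod_{m=u+1}^{\infty}(1-p^{-m})^{-1}$, which is the theorem. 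I expect the main obstacle to be the clean enumeration of sublattices via Hermite normal form, the combinatorial heart of the argument; by comparison the interchange of limit and summation is a routine monotone-convergence step, and the surjection count follows from elementary counting in $G/pG$.
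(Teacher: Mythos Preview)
Your argument is correct and takes a genuinely different route from the paper. The paper's proof is purely combinatorial: it invokes the explicit formula for $|\Aut(G_\lambda)|$ in terms of the partition $\lambda$ attached to $G$, rewrites the weighted sum as a generating function in $q=p^{-1}$, and then matches coefficients of $q^N$ on both sides by a Young-diagram bijection that generalizes Hall's argument for $u=0$ (Lemmas~1 and~2 reduce the product side to a sum over partitions with prescribed largest part, and the final step decomposes a Young diagram into Durfee-type squares of sizes $\mu_1,\mu_2,\dots$). Your approach instead realizes $1/|\Aut(G)|$ via orbit counting as a normalized count of sublattices of $\mathbb{Z}_p^n$ with quotient $G$, evaluates the local subgroup zeta function $\sum_H[\mathbb{Z}_p^n:H]^{-s}=\prod_{i=0}^{n-1}(1-p^{i-s})^{-1}$ by Hermite normal form, specializes $s=n+u$, and lets $n\to\infty$ by monotone convergence. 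The gain of your method is conceptual clarity (the weight $1/|\Aut(G)|$ appears for free from orbit--stabilizer, with no need for the explicit automorphism formula or any partition bijection) and portability: this is precisely the style of argument that extends to finite modules over other rings of integers, as Cohen and Lenstra themselves do. The paper's approach, by contrast, stays entirely within formal-power-series combinatorics, requires no $p$-adic input or limiting step, and delivers a bijective explanation of the identity at the level of partitions.
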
 

The above result was obtained by Cohen and Lenstra in their famous paper \cite{cohen}. Their approach is more complicated but has the advantage that it generalizes naturally to finite modules over the rings of integers of number fields. In the special case of $u=0$, a nice combinatorial proof was given by Hall, which can be found in \cite{hall}. Our proof is a generalization of the proof of Hall.

\section{Proof of the Theorem}
For $m\geq 0$, let $a_m$ be the number of partitions of $m$ with each part at least $u+1$ and for $i,j\geq 0$, let $b_{i,j}$ be the number of partitions of $i$ with greatest part exactly equal to $j$. 

\begin{lemma}  
For all $m\geq 0$, the following holds
$$a_m=\sum_{i+uj=m}b_{i,j}.$$
\end{lemma}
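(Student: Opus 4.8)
The plan is to decompose $a_m$ according to the number of parts of the partition and then match each piece with a single term $b_{i,j}$ via two successive bijections. First I would fix $j\geq 0$ and consider the partitions of $m$ into exactly $j$ parts, each part being at least $u+1$. Subtracting $u$ from every part sends such a partition to a partition of $m-uj$ into exactly $j$ positive parts, since the condition ``part $\geq u+1$'' becomes ``part $\geq 1$''; this map is a bijection, its inverse simply adding $u$ back to each part. Writing $i=m-uj$, this identifies the partitions of $m$ into exactly $j$ parts $\geq u+1$ with the partitions of $i$ into exactly $j$ parts, where $i+uj=m$.

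The second step is conjugation of Young diagrams. A partition of $i$ into exactly $j$ parts has a diagram with exactly $j$ rows, and its transpose is a partition of $i$ whose largest part is exactly $j$. This standard involution on partitions gives a bijection between partitions of $i$ into exactly $j$ parts and partitions of $i$ with greatest part exactly $j$, the latter being counted by $b_{i,j}$. Composing the two bijections shows that the number of partitions of $m$ into exactly $j$ parts each $\geq u+1$ equals $b_{i,j}$ with $i=m-uj$.

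Finally I would sum over $j$. Every partition counted by $a_m$ has a well-defined number of parts $j\geq 0$, so $a_m$ is the sum over $j$ of the number of partitions of $m$ into exactly $j$ parts each $\geq u+1$. By the previous two steps this inner count is $b_{i,j}$ with $i=m-uj$, and the requirement that $i$ be a non-negative integer is precisely the condition $i+uj=m$ with $i,j\geq 0$. Therefore $a_m=\sum_{i+uj=m}b_{i,j}$, as claimed.

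I expect no serious obstacle here, since both maps (shifting each part down by $u$ and conjugating the diagram) are elementary bijections; the one point needing care is bookkeeping rather than depth. In particular one must verify the edge case $j=0$, where only the empty partition occurs, forcing $m=i=0$, consistent with $b_{0,0}=1$ and $b_{m,0}=0$ for $m>0$. The main thing to state cleanly is that, after shifting, ``exactly $j$ parts'' corresponds under conjugation to ``greatest part exactly $j$'', which is exactly the quantity $b_{i,j}$ tabulates.
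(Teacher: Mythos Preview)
Your proof is correct and amounts to the conjugate version of the paper's bijection: the paper adds $u$ extra copies of the largest part $j$ to a partition counted by $b_{i,j}$ and then conjugates to reach a partition of $m$ with all parts $\geq u+1$, whereas you start from such a partition with exactly $j$ parts, subtract $u$ from each part, and then conjugate. Since removing $u$ from every part is, after conjugation, the same as deleting $u$ copies of the largest part, your map is literally the inverse of the paper's, so the two arguments coincide.
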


\begin{proof}
We will give a bijection argument. Note that, the number of partitions of $i$ with greatest part $j$, where $i+uj=m$, is equal to the number of partitions of $i+uj=m$ with greatest part $j$ occurring at least $u+1$ times. Hence $\sum_{i+uj=m}b_{i,j}$ is equal to the number of partitions of $m$ with greatest part occurring at least $u+1$ times. Now, a partition of $m$ has greatest part occurring at least $u+1$ times if and only if it's conjugate partition has each part at least $u+1$. This gives a bijection between the partitions of $m$ with greatest part occurring at least $u+1$ times and the partitions of $m$ with each part at least $u+1$. Therefore, $\sum_{i+uj=m} b_{i,j}=a_m.$
\end{proof}

\begin{lemma}
For each $n\geq 0$, let us define 
$$f_n(q):=\sum_{N=0}^\infty b_{N,n}q^N,$$
which is a formal power series in $q$. Then, 
$$\prod_{j=u+1}^\infty (1-q^j)^{-1}=\sum_{n=0}^\infty f_n(q)q^{nu}.$$
\end{lemma}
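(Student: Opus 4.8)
The plan is to prove the identity by comparing coefficients of $q^m$ on both sides as formal power series in $q$, with Lemma 1 serving as the bridge between the two coefficient counts. First I would identify the left-hand side as a partition generating function. Expanding each factor geometrically as $(1-q^j)^{-1} = \sum_{k\geq 0} q^{jk}$ and multiplying over all $j \geq u+1$, the coefficient of $q^m$ in $\prod_{j=u+1}^\infty (1-q^j)^{-1}$ counts the number of ways to express $m$ as an unordered sum of parts each at least $u+1$; that is, it equals $a_m$. Hence the left-hand side is $\sum_{m=0}^\infty a_m q^m$.

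Next I would expand the right-hand side by substituting the definition of $f_n$, obtaining
$$\sum_{n=0}^\infty f_n(q)\,q^{nu} = \sum_{n=0}^\infty \sum_{N=0}^\infty b_{N,n}\, q^{N+nu}.$$
To extract the coefficient of $q^m$, I would collect those terms whose total exponent satisfies $N + nu = m$. Setting $i=N$ and $j=n$, this condition reads $i+uj=m$, so the coefficient of $q^m$ on the right equals $\sum_{i+uj=m} b_{i,j}$. By Lemma 1 this sum is precisely $a_m$, which agrees with the coefficient computed on the left; since the coefficients match for every $m$, the two power series are equal.

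The only point demanding care is the legitimacy of regrouping the double sum into coefficients of a single power series, and I expect this to be the sole (mild) obstacle rather than a genuine difficulty. It is resolved by noting that for each fixed $m$ only finitely many pairs $(N,n)$ contribute: since $b_{N,n}=0$ whenever $n > N$, any contributing term satisfies $n \leq N \leq N + nu = m$, so only finitely many terms land in a given degree. The rearrangement is therefore valid as an identity of formal power series, and the substantive combinatorial work has already been carried out in the bijection of Lemma 1.
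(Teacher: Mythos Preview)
Your proof is correct and follows essentially the same route as the paper: identify the left-hand side as $\sum_m a_m q^m$, expand the right-hand side as a double sum, collect the coefficient of $q^m$ as $\sum_{i+uj=m} b_{i,j}$, and invoke Lemma~1. The only addition is your explicit justification that finitely many pairs $(N,n)$ contribute to each degree, a point the paper leaves implicit.
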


\begin{proof}
Note that,
$$\prod_{j=u+1}^\infty (1-q^j)^{-1}=\sum_{m=0}^\infty a_mq^m $$
since, for each $m$, the coefficient of $q^m$ on LHS is equal to the number of partitions of $m$ with each part at least $u+1$. Then,

\begin{flalign*}
\sum_{n=0}^\infty f_n(q)q^{nu}& =\sum_{n=0}^\infty\left(\sum_{N=0}^\infty b_{N,n}q^N\right)q^{nu}\\
& =\sum_{n=0}^\infty \sum_{N=0}^\infty b_{N,n}q^{N+nu}\\
& =\sum_{m=0}^\infty \left(\sum_{i+uj=m}b_{i,j}\right)q^m\\
& =\sum_{m=0}^\infty a_mq^m\\
& =\prod_{j=u+1}^\infty(1-q^j)^{-1}.
\end{flalign*}
\end{proof}

We also need the following lemma, which computes the cardinality of $\Aut (G)$, for a finite abelian $p$-group $G$.

\begin{lemma}
Fix a prime $p$. Suppose $G$ is a finite abelian $p$-group and $$G=\prod_{i=1}^k\left(\mathbb{Z}/p^{e_i}\mathbb{Z}\right)^{r_i}$$ for some $k\geq 0$, $e_1>e_2>\cdots >e_k>0$ and $r_i\geq 0.$ Then 
$$\left|\Aut (G)\right|=\left(\prod_{i=1}^k\left(\prod_{s=1}^{r_i}(1-p^{-s})\right)\right)\left( \prod_{1\leq i,j\leq k}p^{\min (e_i,e_j)r_ir_j}\right).$$
\end{lemma}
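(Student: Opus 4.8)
The plan is to realize $\operatorname{End}(G)$ concretely as a set of matrices and then isolate, among these, the invertible ones by reducing modulo $p$. Writing $G$ as a direct sum of cyclic factors $\bigoplus_{i=1}^n \mathbb{Z}/p^{c_i}\mathbb{Z}$, where the exponent $e_l$ occurs $r_l$ times among the $c_i$ (so $n=\sum_l r_l$), I would first record each endomorphism $\phi$ by the matrix $(a_{ij})$ defined by $\phi(g_j)=\sum_i a_{ij}g_i$ on the chosen generators $g_i$. Well-definedness forces $p^{c_j}a_{ij}\equiv 0 \pmod{p^{c_i}}$, i.e. $a_{ij}$ is a multiple of $p^{\max(0,\,c_i-c_j)}$ in $\mathbb{Z}/p^{c_i}\mathbb{Z}$; counting the admissible residues gives exactly $p^{\min(c_i,c_j)}$ choices for each entry, independently. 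Hence $|\operatorname{End}(G)|=\prod_{i,j}p^{\min(c_i,c_j)}$, and regrouping the cyclic factors into blocks of equal exponent turns this into $\prod_{1\le l,m\le k}p^{\min(e_l,e_m)r_lr_m}$, which is already the second factor in the statement.

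Next I would establish the reduction principle: $\phi$ is an automorphism if and only if the induced map $\bar\phi$ on $G/pG\cong\mathbb{F}_p^{\,n}$ is invertible. The forward direction is immediate. For the converse, surjectivity of $\bar\phi$ means $G=\phi(G)+pG$; substituting this identity into itself repeatedly yields $G=\phi(G)+p^NG$ for every $N$, and since $p^NG=0$ for large $N$ we get $G=\phi(G)$, so $\phi$ is onto and, as $G$ is finite, an automorphism. The matrix of $\bar\phi$ is $(a_{ij}\bmod p)$, and the well-definedness constraint shows $a_{ij}\equiv 0\pmod p$ whenever $c_i>c_j$. Ordering the blocks by decreasing exponent therefore makes $\bar\phi$ block lower-triangular, so $\bar\phi$ is invertible exactly when each diagonal block, an otherwise arbitrary matrix in $M_{r_l}(\mathbb{F}_p)$, lies in $\mathrm{GL}_{r_l}(\mathbb{F}_p)$.

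The count then factors cleanly, because the entries $a_{ij}$ are chosen independently and the invertibility condition involves only the diagonal blocks modulo $p$. Every off-diagonal entry stays free; within the diagonal block of exponent $e_l$, exactly the fraction $|\mathrm{GL}_{r_l}(\mathbb{F}_p)|/p^{\,r_l^2}=\prod_{s=1}^{r_l}(1-p^{-s})$ of the configurations reduce to an invertible matrix, the higher $p$-adic digits of each entry being unconstrained. Multiplying over all blocks gives $|\Aut(G)|=|\operatorname{End}(G)|\cdot\prod_{l=1}^k\prod_{s=1}^{r_l}(1-p^{-s})$, which combines with the first step to yield the claimed formula.

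I expect the main obstacle to be the reduction principle of the second paragraph: pinning down that invertibility of $\phi$ is detected entirely modulo $p$, and that the reduced matrix is block-triangular with the diagonal blocks unconstrained. Once that structural fact is secure, the enumeration is routine, being just the standard computation that the proportion of invertible matrices in $M_r(\mathbb{F}_p)$ equals $\prod_{s=1}^r(1-p^{-s})$, applied one block at a time.
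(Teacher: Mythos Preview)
Your argument is correct. The paper does not actually prove this lemma; it simply cites Theorem~1.2.10 of Lengler's thesis (reference~[3]). The approach you take---describing $\operatorname{End}(G)$ by matrices with the congruence constraints $p^{\max(0,\,c_i-c_j)}\mid a_{ij}$, counting to obtain $\lvert\operatorname{End}(G)\rvert=\prod_{i,j}p^{\min(c_i,c_j)}$, then detecting invertibility modulo $p$ via the Nakayama-type iteration $G=\phi(G)+p^N G$ and exploiting the block-triangular shape of the reduced matrix---is essentially the standard proof that appears in the cited source and elsewhere. All steps are sound: the well-definedness constraint does force the strictly upper blocks of $\bar\phi$ to vanish while leaving each diagonal block an arbitrary element of $M_{r_l}(\mathbb{F}_p)$, and the independence of the entries $a_{ij}$ makes the final fraction count $\prod_{l}\lvert\mathrm{GL}_{r_l}(\mathbb{F}_p)\rvert/p^{r_l^{2}}=\prod_{l}\prod_{s=1}^{r_l}(1-p^{-s})$ legitimate. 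In short, you have supplied a self-contained proof where the paper only gives a pointer.
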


\begin{proof}
See [3], Theorem 1.2.10.
\end{proof}

Now let us return to the proof of the theorem. We follow a similar argument given in \cite{hall} or in the proof of Theorem 2.1.2 of \cite{lengler}. First, note that, there is an associated partition corresponding to every finite abelian $p$-group and corresponding to every partition there is an associated finite abelian $p$-group; this comes from writing finite abelian $p$-groups uniquely as a product of cyclic groups. For example, if we write a finite abelian $p$-group $G$ as, 
$$G=\prod_{i=1}^k\mathbb{Z}/p^{e_i}\mathbb{Z}$$ where $e_1\geq e_2\geq\cdots \geq e_k>0$, then the associated partition $\lambda$ is given by $\lambda =(e_1,e_2,\ldots ,e_k)$. And, corresponding to every partition $\lambda =(\lambda_1,\lambda_2,\ldots ,\lambda_k)$ the associated $p$-group $G_\lambda$ is given by $G_\lambda =\prod_{i=1}^k\mathbb{Z}/p^{\lambda_i}\mathbb{Z}.$ Note that, if $|\lambda |$ denotes the size of the partition $\lambda$, then the order of the $p$-group $G_\lambda$ is given by $\left| G_\lambda\right| =p^{|\lambda |}$.

Let $\lambda :=(\lambda_1,\ldots ,\lambda_l)$ be a partition of size $n$ and suppose $\lambda^\prime :=(\lambda^\prime_1,\ldots ,\lambda_m^\prime)$ is it's conjugate partition. Then, note that, in $G_\lambda$ (as a product of cyclic groups), the factor $\mathbb{Z}/p^i\mathbb{Z}$ occurs exactly $\lambda^\prime_i-\lambda^\prime_{i+1}$ times (where $\lambda_{m+1}^\prime :=0).$ Then using Lemma 3 we can write 
\begin{flalign*}
\left|\Aut (G_\lambda )\right|& =\left(\prod_{i=1}^m\left(\prod_{s=1}^{\lambda^\prime_i-\lambda^\prime_{i+1}}(1-p^{-s})\right)\right)\left(\prod_{1\leq i,j\leq m} p^{\min (i,j)(\lambda^\prime_i-\lambda^\prime_{i+1})(\lambda^\prime_j-\lambda^\prime_{j+1})}\right)\\
&= \left(\prod_{i=1}^m\left(\prod_{s=1}^{\lambda^\prime_i-\lambda^\prime_{i+1}}(1-p^{-s})\right)\right) p^{\sum_{1\leq i,j\leq m}\min (i,j)(\lambda^\prime_i-\lambda^\prime_{i+1})(\lambda^\prime_j-\lambda^\prime_{j+1})}\\
& =\left(\prod_{i=1}^m\left(\prod_{s=1}^{\lambda^\prime_i-\lambda^\prime_{i+1}}(1-p^{-s})\right)\right) p^{\sum_{i=1}^m(\lambda_i^\prime )^2}.
\end{flalign*}
Then, setting $q=p^{-1}$, we have
\begin{flalign*}
\sum_{G\in\mathcal{G}_p}\frac{1}{|G|^u\cdot\left|\Aut (G)\right|}& =\sum_{n=0}^\infty\sum_{\substack{G_\lambda\in\mathcal{G}_p\\ |\lambda |=n}}\frac{q^{nu}}{\left|\Aut (G_\lambda )\right|}\\
& =\sum_{n=0}^\infty q^{nu}\sum_{\substack{G_\lambda\in\mathcal{G}_p\\ |\lambda |=n}}\left(\prod_{i=1}^m\left(\prod_{s=1}^{\lambda^\prime_i-\lambda^\prime_{i+1}}(1-p^{-s})^{-1}\right)\right)\left(\prod_{i=1}^m p^{-(\lambda_i^\prime )^2}\right)\\
& =\sum_{n=0}^\infty q^{nu}\sum_{\substack{G_\lambda\in\mathcal{G}_p\\ |\lambda |=n}}\left(\prod_{i=1}^m\left(\prod_{s=1}^{\lambda^\prime_i-\lambda^\prime_{i+1}}(1-q^{s})^{-1}\right)\right)\left(\prod_{i=1}^m q^{(\lambda_i^\prime )^2}\right).
\end{flalign*}
Note that, $\lambda^\prime$ varies over all partitions as $\lambda$ varies over all partitions. Therefore putting $\mu =\lambda^\prime$, we get
\begin{flalign*}
\sum_{G\in\mathcal{G}_p}\frac{1}{|G|^u\cdot\left|\Aut (G)\right|}& =\sum_{n=0}^\infty q^{nu}\sum_{\substack{G_\mu\in\mathcal{G}_p\\ |\mu |=n}}\left(\prod_{i=1}^m\left(\prod_{s=1}^{\mu_i-\mu_{i+1}}(1-q^{s})^{-1}\right)\right)\left(\prod_{i=1}^m q^{\mu_i^2}\right)\\
&=\sum_{n=0}^\infty q^{nu} \sum_{\substack{G_\mu\in\mathcal{G}_p\\ |\mu |=n}}\left(\prod_{i=1}^m\psi_{\mu_i,\mu_{i-1}-\mu_i}(q)\right)\left(\prod_{i=1}^m q^{\mu_i^2}\right) ,
\end{flalign*}
where $$\psi_{a,b}(q):=\frac{\prod_{i=1}^{a+b}(1-q^i)}{\prod_{i=1}^a(1-q^i)\prod_{i=1}^b (1-q^i)},\quad \psi_{a,\infty }(q):=\frac{1}{\prod_{i=1}^a (1-q^i)}$$ and $\mu_0:=\infty$; note that, the coefficient of $q^n$ in $\psi_{a,b}(q)$ is the number of partitions of $n$ with hight at most $a$ and width at most $b$.Therefore, by Lemma 2,  it is enough to show that, for each $n\geq 0$,
$$f_n(q)=\sum_{\substack{G_\mu\in\mathcal{G}_p\\ |\mu |=n}}\left(\prod_{i=1}^m\psi_{\mu_i,\,\mu_{i-1}-\mu_i}(q)\right)\left(\prod_{i=1}^m q^{\mu_i^2}\right).$$
That is, we need to equate coefficients of $q^N$ on both sides, for each $N\geq 0$. The argument is same as given in [2] or [3].

Note that, coefficient of $q^N$ on LHS is equal to $b_{N,n}$ which is the number of partitions of $N$ with greatest part $n$. Let $\nu$ be a partition of $N$ with greatest part equal to $n$; then, to each such $\nu$ we will associate a partition $\mu$ of size $n$ on the RHS. Consider the conjugate $\nu^\prime$ of $\nu$ and let $D$ be the standard \textit{Young diagram} of $\nu^\prime.$ Note that $\nu^\prime$ has hight equal to $n$. Now, define $\mu:=(\mu_1,\ldots ,\mu_m)$ as follows:
\begin{itemize}
\item
Define $\mu_1$ to be the largest integer such that $(\mu_1,\mu_1)\in D.$
\item
For $i\geq 2$, define $\mu_i$ to be the largest integer such that $$(\mu_1+\cdots +\mu_i,\mu_i)\in D.$$
\end{itemize}
(where $(i,j)\in D$ is defined as the block of $D$ situated at the $i$th row from top and $j$th column from left). Then $|\mu |=n$. If $M$ is the number of blocks outside the squares of size $\mu_i$ then $M=N-\mu_1^2-\mu_2^2-\cdots -\mu_m^2.$ Let $M_i$ be the number of blocks at the right of the block of size $\mu_i$, i.e. 
$$M_i:=\left|\{(x,y)\in D : \mu_1+\cdots +\mu_{i-1}<x<\mu_1+\cdots +\mu_i, \;\mu_i<y\}\right|.$$
Then the blocks corresponding to $M_i$ gives  a partition  of $M_i$ of hight at most $\mu_i$ and width at most $\mu_{i-1}-\mu_i$ and hence this contributes to the coefficient of $q^{M_i}$ in $\psi_{\mu_i,\mu_{i-1}-\mu_i}(q)$ on RHS. Note that $M=M_1+\cdots +M_m$ which implies $M_1+\cdots +M_m+\mu_1^2+\cdots +\mu_m^2=N$ and hence $\mu$ contributes to the coefficient of $q^N$ on RHS.

Note that, the above construction can be reversed. Suppose $\mu$ is a partition which corresponds to the coefficient of $q^N$ on RHS such that $\mu$ is specified by the numbers $M_i$, where $M_1+\cdots +M_m+\mu_1^2+\cdots +\mu_m^2=N$, and partitions of $M_i$ of hight at most $\mu_i$ and width at most $\mu_{i-1}-\mu_i.$ Then we can construct the Young diagram $D$ and construct the corresponding partition $\nu$ on LHS. Hence, we conclude that the coefficients of $q^N$ on both sides are equal and this proves the theorem.

\section*{Author information}
Pritam Majumder, Department of Mathematics and Statistics,

IIT Kanpur, Kanpur-208016, U.P., India.

Email: \texttt{pritamaj@gmail.com}

\end{document}